\newtheorem{definition}{Definition}[section]
\newtheorem{theorem}[definition]{Theorem}
\newtheorem{lemma}{Lemma}[section]
\newtheorem{proposition}[definition]{Proposition}
\newtheorem{remark}{Remark}
\newtheorem{example}{Example}
\numberwithin{equation}{section}
\begin{document}
\title[On Statistical Convergence of Metric Valued Sequences]{On Statistical
Convergence of Metric Valued Sequences}
\author{ MEHMET K\"{u}\c{c}\"{u}kaslan*, U\u{g}UR De\u{g}er*, AND OLEKSIY Dovgoshey** }
\address{* Mersin University \newline \hspace*{\parindent}
 Faculty of Science and Literature
 \newline \hspace*{\parindent} Department of Mathematics \newline \hspace*{\parindent} 33343 Mersin \newline \hspace*{\parindent} TURKEY}
\email{mkucukaslan@mersin.edu.tr(mkkaslan@gmail.com)}
\email{udeger@mersin.edu.tr}
\address{** Institute of Applied Mathematics and Mechanics of NASU \newline \hspace*{\parindent}
 R. Luxemburg str. 74
 \newline \hspace*{\parindent} 83114 Donetsk \newline \hspace*{\parindent} UKRAINE}
\email{aleksdov@mail.ru}
\subjclass[2010]{ 40A05,
54A20, 54E35} \keywords{Metric spaces, asymptotic density, $d-$statistical convergence, statistical convergence of subsequences.}

\begin{abstract}
We study the statistical convergence of metric valued sequences and of their
subsequences. The interplay between the statistical and usual convergences  in metric
spaces is also studied.

\end{abstract}

\maketitle

\section{Introduction and Definitions}

Analysis on metric spaces has rapidly developed in present time (see,
\cite{Heinonen}, \cite{Papadopoulos}). This development is usually based on some
generalizations of the differentiability. Ordinary the generalizations of the differentiation
involve linear structure by means of embeddings of metric spaces in a suitable normed
space or by use of geodesics.

A new intrinsic approach to the introduction of the smooth structure for general metric
space was proposed   by O. Martio and O. Dovgoshey in \cite{Dovgoshey1} (see also \cite{DM} and \cite{Dovgoshey3}, \cite{Dor}, \cite{Dovgoshey2}, \cite{DAK}, \cite{DD}). The approach in
\cite{Dovgoshey1} is completely based on the convergence of the metric valued sequences
but it is not apriori clear that the standard convergence is the best possible way to
obtain a smooth structure for arbitrary metric space.

The problem of convergence in different ways of a real (or complex) valued divergent
sequence goes back to the  beginning of nineteenth century. A lot of different
convergence methods were defined (Cesaro, N\"{o}rlund, Weighted Mean, Abel etc.) and
applied to many branches of mathematics. Almost all convergence methods depend on the
algebraic structure of the space. It is clear that metric space does not have the
algebraic structure in general. However, the notion of statistical convergence is easy
to extend for arbitrary metric spaces and this provides a general framework for
summability in such spaces \cite{FK}, \cite{Te}. Thus, the studies of statistical
convergence give a natural foundation for upbuilding of different tangent spaces to
general metric spaces.

The construction of tangent spaces in  \cite{Dovgoshey2}, \cite{DAK},
\cite{DD}, \cite{Dovgoshey1} is based on the following fundamental fact: ``If $(x_n)$ is a convergent
sequence in a metric space, then each subsequence $(x_{n(k)})$ of $(x_n)$ is also
convergent''. Thus the convergence of subsequence $(x_{n(k)})$ does not depend on the
choice of $(x_{n(k)}).$  Unfortunately it is not the case for the statistical convergent
sequences. The applications of the statistical convergence to the infinitesimal geometry
of metric spaces should be based on the complete understanding of the structure of
statistical convergent subsequences. To describe this structure is the main goal of this
paper. Moreover we study some interrelations between the statistical convergence and the
usual one for general metric spaces.

Let us remember the main definitions. Let $(X,d)$ be a metric space. For convenience
denote by $\tilde X$ the set of all sequences of points from $X.$

\begin{definition}
\label{def1}  A  sequence $(x_{n})\in \tilde X$ is called convergent to a point $a \in
X$ if for every $\epsilon >0$ there is $n_{0}=n_{0}(\epsilon)\in
\mathbb{N}
$  \ such that $n>n_{0}$ implies
\begin{equation}
d(x_{n},a)<\epsilon.  \label{equ1}
\end{equation}%

\end{definition}

\begin{definition}
\label{def2+} A  metric valued sequence $\tilde x=(x_{n})\in \tilde X$ is called  $d-$%
statistical convergent to a point $a \in X$ \ if, for every $\epsilon >0,$%
\begin{equation}
\underset{n\rightarrow \infty }{\lim }\frac{1}{n}\left\vert \left\{ k:k\leq n%
\text{ and }d(x_{k},a)\geq \epsilon \right\} \right\vert =0. \label{equ2}
\end{equation}
\end{definition}

In \eqref{equ2} and later  $\left\vert B\right\vert $ denotes the number of elements of
the set $B.$

 The idea
of statistical convergence goes back to Zygmud \cite{Zygmund}. It was formally
introduced by Steinhous \cite{Steinhous} and  Fast \cite{Fast}. In
recent years, it has become an active research for mathematicians \cite%
{Cervanansky}, \cite{Connor}, \cite{Fridy}, \cite{Fridy1}, \cite{Miller}, etc.

\begin{definition}
\label{def3} \cite{Fast}\emph{(Dense subset of $%
\mathbb{N}
$) }A set $K\subseteq \mathbb N$  is called a statistical dense subset of $\mathbb N$ if
\begin{equation*}
\underset{n\rightarrow \infty }{\lim }\frac{1}{n}\left\vert K(n)\right\vert =1
\end{equation*}%
where $K(n):=\left\{ k\in K: k \leq n\right\} .$
\end{definition}

It may be proved that the intersection of two dense subsets of natural numbers is dense.
Moreover it is clear that the supersets of dense sets are also dense. Hence the family
of all dense subsets of $\mathbb N$ is a filter on $\mathbb N.$ Theorem~\ref{theo3+},
given at the end of the next section, implies, in particular, that the $d$ --
statistical convergence is simply the convergence in $(X,d)$ with respect to this
filter.

\begin{remark}
If \ $K_{1}$ is a statistical dense subset of $\mathbb N,$ $K_{2}\subseteq \mathbb N$
and $\underset{n\rightarrow \infty }{\lim }\frac{\left\vert K_{1}(n)\right\vert }{%
\left\vert K_{2}(n)\right\vert }=1,$ then $K_{2}$ is also a
statistical dense subset of $\mathbb N$.
\end{remark}
\begin{definition}
\label{def4}\emph{(Dense subsequence)} If $(n(k))$ is an infinite, strictly
increasing sequence of natural numbers and
$\tilde{x}=(x_n)\in\tilde{X}$, define
\begin{equation}\label{K}
\tilde x'=(x_{n(k)}) \quad and \quad K_{\tilde x'}=\left\{ n(k):k\in
\mathbb{N}
\right\} .
\end{equation}
The subsequence $\tilde x'$ of $\tilde x$ is called a dense subsequence of $\tilde x$ if $K_{\tilde
x'}$ is a dense subset of $\mathbb N$. \end{definition}

In our next definition we introduce an equivalence relation on the set $\tilde
 X.$
 \begin{definition}\label{def6} The sequences $\tilde x=(x_n)\in \tilde X$ and $\tilde y=(y_n)\in\tilde X$ are called statistical equivalent if there is a statistical dense subset $M$ of $\mathbb N$ such that $x_n=y_n$ for each $n\in M.$ \end{definition}

We write $\tilde x \asymp \tilde y$ if $\tilde x$ and $\tilde y$ are statistical
equivalent.
\section{Convergent sequences and statistical convergent ones}

In this section, some basic results on $d-$statistical convergence will be given for an
arbitrary metric space. In particular, it is shown that there is a one-to-one
correspondence between metrizable topologies on $X$ and the subsets of $\tilde X$
consisting of statistical convergent sequences determined by some metric compatible with the topologies.

The following result is well known.

\begin{proposition}
\bigskip \label{Prop1} Let $(X,d)$ be a metric space, $(x_{n})\in \tilde X$ and $a\in X$. If $(x_n)$ is convergent to $a,$ then $(x_n)$ is $d-$statistical
convergent to $a.$

\end{proposition}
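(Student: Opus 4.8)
The plan is to directly verify the density condition \eqref{equ2} using the definition of ordinary convergence. The key observation is that ordinary convergence controls \emph{all but finitely many} terms, and a finite set contributes nothing to the asymptotic density.

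\medskip

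First I would fix an arbitrary $\epsilon > 0$. Since $(x_n)$ converges to $a$ in the sense of Definition~\ref{def1}, there exists $n_0 = n_0(\epsilon) \in \mathbb{N}$ such that $n > n_0$ implies $d(x_n, a) < \epsilon$. The heart of the argument is then the following inclusion: for every $n > n_0$,
\begin{equation*}
\left\{ k : k \leq n \text{ and } d(x_k, a) \geq \epsilon \right\} \subseteq \left\{ 1, 2, \dots, n_0 \right\},
\end{equation*}
because any index $k$ with $d(x_k, a) \geq \epsilon$ must satisfy $k \leq n_0$ by the choice of $n_0$. Consequently the cardinality of the left-hand set is at most $n_0$, a constant that does not depend on $n$.

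\medskip

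Next I would take the bound into the density quotient. For all $n > n_0$ we obtain
\begin{equation*}
0 \leq \frac{1}{n} \left\vert \left\{ k : k \leq n \text{ and } d(x_k, a) \geq \epsilon \right\} \right\vert \leq \frac{n_0}{n},
\end{equation*}
and since $n_0$ is fixed, the right-hand side tends to $0$ as $n \to \infty$. By the squeeze principle the limit in \eqref{equ2} equals $0$. As $\epsilon > 0$ was arbitrary, this establishes that $(x_n)$ is $d$--statistical convergent to $a$, which is exactly the claim of Proposition~\ref{Prop1}.

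\medskip

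I do not anticipate any genuine obstacle here: the proposition is essentially the formal statement that the finite set of ``exceptional'' indices has asymptotic density zero. The only point requiring mild care is the bookkeeping of which direction of the convergence definition is used (I apply the implication ``$n > n_0 \Rightarrow d(x_n,a) < \epsilon$'' contrapositively to locate all bad indices below $n_0$), but this is routine.
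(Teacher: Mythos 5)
Your proof is correct. The paper states Proposition~\ref{Prop1} without proof (calling it well known), and your argument --- locating all exceptional indices in the finite set $\{1,\dots,n_0\}$ and bounding the density quotient by $n_0/n \to 0$ --- is precisely the standard argument the paper implicitly relies on.
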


The converse of Proposition~\ref{Prop1} is not true in general. \begin{example} Assume
that $x,y\in X$
 are distinct ($x\neq y$) and
 define the following sequence%
\begin{equation*}
x_{n}:=\left\{
\begin{array}{cccc}
x & \mbox{if} & n=k^{2} & \mbox{for some } k\in\mathbb{N}\\
y & \mbox{if} & n\neq k^{2} & \mbox{for all } k\in\mathbb{N}%
\end{array}%
\right.
\end{equation*}%
that is,
\begin{equation}
(x_{n})=(\overset{1^{2}}{\widehat{x}},y,y,\overset{2^{2}}{\widehat{x}}%
,y,y,y,y,\overset{3^{2}}{\widehat{x}},y,y,y,y,y,y,\overset{4^{2}}{\widehat{x}%
},...).  \label{equ3+}
\end{equation}

This sequence is not a Cauchy sequence because $d(x_{n^{2}},x_{n^{2}+1})=d(x,y)>0$ for
every $n\in\mathbb N.$ Consequently $(x_n)$ is not convergent.
Let us show that the sequence $\tilde x=(x_{n})$ is $d-$statistical convergent to $y.$
Denote
\begin{equation*}
A(n,\epsilon ):=\left\{ m:m\leq n\text{ and }d(x_{m},y)\geq \epsilon \right\}
\end{equation*}%
for every $\epsilon >0$ and $n\in\mathbb N$. We must prove that \begin{equation}
\label{equ2.4}\lim_{n\to\infty}\frac{\left\vert A(n, \epsilon) \right \vert}{n}=0
\end{equation} for each $\epsilon >0.$
Since $ A(n,\epsilon_1)\supseteq A(n,\epsilon_2)$ for $\epsilon_1 \le \epsilon_2,$ it is sufficient to take $\epsilon =$ $= d(x,y).$ In this case a simple calculation shows that
$\left\vert A(n,\epsilon )\right\vert \leq  \sqrt{n}%
.$ Limit relation \eqref{equ2.4} follows. \end{example}

For singleton sets the converse of Proposition \ref{Prop1} is true.

\begin{theorem}
\label{theo1+} Let $(X,d)$ be a nonempty metric space. The following two
statements are equivalent.
\begin{enumerate}
 \item[\rm(i)]\textit{The set of all convergent sequences $\tilde x \in \tilde X$ is the same as the
set of all  d -- statistical~convergent~sequences~$\tilde x \in \tilde X.$}
\item[\rm(ii)]\textit{ The set $X$ is a singleton.}
\end{enumerate}
\end{theorem}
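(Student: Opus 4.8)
The direction (ii)$\Rightarrow$(i) is the easy one, so I would dispose of it first. If $X=\{a\}$ is a singleton, then every sequence $\tilde x=(x_n)$ is the constant sequence $(a,a,a,\dots)$. Such a sequence trivially converges to $a$ in the sense of Definition~\ref{def1}, since $d(x_n,a)=d(a,a)=0<\epsilon$ for all $n$. It is likewise $d$-statistical convergent to $a$ by Definition~\ref{def2+}, because the set $\{k:k\le n,\ d(x_k,a)\ge\epsilon\}$ is empty for every $n$. Hence the two classes of sequences coincide (they are both all of $\tilde X$), and (i) holds.

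The substantive direction is (i)$\Rightarrow$(ii), which I would prove by contraposition: assuming $X$ is \emph{not} a singleton, I would exhibit a sequence that is $d$-statistical convergent but not convergent, thereby showing the two sets differ. If $|X|\ge 2$, pick two distinct points $x,y\in X$. The natural move is to reuse the construction from the Example preceding the theorem: define $x_n=x$ when $n=k^2$ for some $k\in\mathbb N$ and $x_n=y$ otherwise. The Example already establishes that this sequence is $d$-statistical convergent to $y$ (the count $|A(n,\epsilon)|\le\sqrt n$ forces the density to vanish) yet is not convergent, since $d(x_{n^2},x_{n^2+1})=d(x,y)>0$ prevents it from being Cauchy. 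This single witness sequence belongs to the set of $d$-statistical convergent sequences but not to the set of convergent sequences, so statement (i) fails.

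The main obstacle here is essentially organizational rather than technical: the core computation is already done in the Example, so the proof reduces to packaging it correctly as a contrapositive and being careful about what ``the two sets are the same'' means. One subtle point worth checking is that the inclusion of convergent sequences into $d$-statistical convergent ones (Proposition~\ref{Prop1}) is always valid, so condition (i) is really the assertion that there are \emph{no} strictly-$d$-statistically-convergent sequences; the witness sequence is precisely what violates this. I would conclude by noting that the two implications together yield the stated equivalence, completing the proof.
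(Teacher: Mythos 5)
Your proof is correct and follows essentially the same route as the paper: the singleton case handled directly via the constant sequence, and the reverse implication by contraposition using Proposition~\ref{Prop1} together with the witness sequence from the Example (the paper's proof of $(i)\Rightarrow(ii)$ is exactly the one-line citation of these two facts, which you have simply spelled out in detail).
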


\begin{proof}
 Let us assume $X:=\left\{ x\right\} .$ In this case,
$\tilde X$ contains only the constant sequence
$(x,x,x,...)$ which is convergent and $d$ -- statistical convergent. Therefore,
the set of all convergent sequences $(x_n)\in \tilde X$ coincides
the set of all $d$ -- statistical convergent sequences $(x_n)\in
\tilde X.$ The implication $(ii)\Rightarrow (i)$ is proved.

The implication $(i)\Rightarrow (ii)$ follows from Proposition~\ref{Prop1} and Example
1.\end{proof}

In accordance with Theorem~\ref{theo1+} for every non degenerate metric space $(X,d)$
there are $d$ -- statistical convergent sequences $\tilde x \in \tilde X$ which are
divergent. Nevertheless we have the following result.

\begin{theorem}\label{theo2+} Let $(X,d_1)$ and $(X,d_2)$ be two metric spaces with the same
underlining set $X.$ Then the following three statement are equivalent.
\begin{enumerate}
\item[\rm(i)]\textit{The set of $d_1$ -- statistical convergent sequences coincides with the set of \, \, $d_2$ -- statistical convergent sequences. }
\item[\rm(ii)]\textit{The set of sequences which are convergent in the space $(X,d_1)$ coincides with the set of sequences which are convergent in the space $(X,d_2).$}
\item[\rm(iii)]\textit{The metrics $d_1$ and $d_2$ induce one and the same topology on $X.$}
\end{enumerate}
\begin{proof}
If the metric spaces $(X,d_1)$ and $(X,d_2)$ have the common topology, then for every
$a\in X$ and every $\epsilon >0$ there is $\delta = \delta (\epsilon)>0$ such that
\begin{equation*}
\{x\in X: d_{1}(x,a)<\epsilon\}\supseteq \{x\in X: d_{2}(x,a)<\delta\}.
\end{equation*}
This inclusion implies the inequality
\begin{equation*}
\left\vert\{k\in \mathbb N: k\le n \, and \,   d_{1}(x_{k},a)<\epsilon\}\right \vert \ge
\left\vert\{k\in \mathbb N: k\le n \, and \,   d_{2}(x_{k},a)<\delta\}\right \vert
\end{equation*} for every $\tilde x=(x_k)\in\tilde X$ and $n\in\mathbb N.$ If $\tilde
x$ is  $d_2$ -- statistical convergent to $a,$ then using the last inequality we obtain
\begin{equation*}
1\ge \liminf_{n\to\infty}\frac{\left\vert\{k\in \mathbb N: k\le n \, and \,
d_{1}(x_{k},a)<\epsilon\}\right \vert}{n}\end{equation*}
\begin{equation*}
\ge \liminf_{n\to\infty}\frac{\left\vert\{k\in \mathbb N: k\le n \, and \,
d_{2}(x_{k},a)<\delta\}\right \vert}{n}
\end{equation*}
\begin{equation*}
=\lim_{n\to\infty}\frac{\left\vert\{k\in \mathbb N: k\le n \, and \,
d_{2}(x_{k},a)<\delta\}\right \vert}{n}=1.
\end{equation*}
Consequently
\begin{equation}\label{eq2.4}
\liminf_{n\to\infty}\frac{\left\vert\{k\in \mathbb N: k\le n \, and \,
d_{1}(x_{k},a)<\epsilon\}\right \vert}{n}=1
\end{equation} for every $\epsilon >0.$
Since
$$\mathop{\limsup}\limits_{n\to\infty}\frac{\left\vert\{k\in
\mathbb N: k\le n \, and \, d_{1}(x_{k},a)<\epsilon\}\right
\vert}{n}\le 1,
$$
equality \eqref{eq2.4} implies
\begin{equation*}
\lim_{n\to\infty}\frac{\left\vert\{k\in \mathbb N: k\le n \, and \,
d_{1}(x_{k},a)<\epsilon\}\right \vert}{n}=1
\end{equation*} for every $\epsilon >0.$ Thus if $(x_k)$ is $d_2$ -- statistical convergent to
$a,$ then $(x_k)$ is $d_1$ -- statistical convergent to $a.$ The converse implication can be obtained similarly. So the set of $d_1$ -- statistical convergent
sequences and the set $d_2$ -- statistical convergent sequences are the same if $(X,
d_1)$ and $(X, d_2)$ have the common topology. The implication $(iii)\Rightarrow (i)$
follows.

Suppose now that the topologies induced by $d_1$ and $d_2$ are distinct. Then there
exist a point $a\in X$ and $\epsilon_{0}>0$ such that either
\begin{equation}\label{eq2.5}
\{x\in X: d_{1}(x,a) < \epsilon_{0}\} \nsupseteq \{x\in X: d_{2}(x,a) < \delta\}
\end{equation} for every $\delta >0$ or
\begin{equation*}
\{x\in X: d_{2}(x,a) < \epsilon_{0}\} \nsupseteq \{x\in X: d_{1}(x,a) < \delta\}
\end{equation*} for every $\delta >0.$
We assume, without loss of generality, that \eqref{eq2.5} holds for every $\delta >0$. Then there is a
sequence $\tilde x=(x_n)$ such that
\begin{equation}\label{eq2.6}
 d_{2}(x_n,a) < \frac{1}{n} \qquad and \qquad d_{1}(x_n,a)\ge\epsilon_{0}
\end{equation}for each $n\in\mathbb N.$ Let us define a new sequence $\tilde
y=(y_n)\in\tilde X$ by the rule
\begin{equation*}
y_n:=\begin{cases}
         x_n & \mbox{if} $ $ n $ $ \mbox{is odd} \\
         a   & \mbox{if} $ $ n $ $ \mbox{is even}. \\
         \end{cases}
\end{equation*}This definition and \eqref{eq2.6} imply the equality
\begin{equation}\label{eq2.7}
 \lim_{n\to\infty}\frac{\left\vert\{k\in \mathbb N:
d_{1}(y_{k},a)\ge\epsilon_{0} \, and \, k\le n\}\right \vert}{n}=\frac{1}{2}.
\end{equation} It is clear that the sequence $\tilde y$ is $d_2$ -- statistical convergent to
$a.$ If statement $(i)$ holds, then $\tilde y$ is also $d_1$ --
statistical convergent. Using Theorem~\ref{theo2} (the proof of this theorem does not depend on
Theorem~\ref{theo2+}) we obtain that $\tilde y$ is $d_1$ --
statistically convergent to the same $a.$ Consequently we have
\begin{equation*}
 \lim_{n\to\infty}\frac{\left\vert\{k\in \mathbb N:
d_{1}(y_{k},a)\ge\epsilon_{0} \, and \, k\le n\}\right \vert}{n}=0,
\end{equation*} contrary to \eqref{eq2.7} Thus the implication $(i)\Rightarrow (iii)$
holds and we obtain the equivalence $(iii)\Leftrightarrow (i).$

The equivalence $(iii)\Leftrightarrow (ii)$ can be obtained similarly and we omit the
proof here.
\end{proof}
\end{theorem}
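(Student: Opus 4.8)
The plan is to establish all three equivalences from two elementary tools: a comparison of the metric balls of $d_1$ and $d_2$, and the monotonicity of asymptotic density under set inclusion. Throughout I write $B_i(a,r):=\{x\in X:d_i(x,a)<r\}$ for $i\in\{1,2\}$. Since two metrics induce the same topology exactly when each $d_1$-ball about a point contains some $d_2$-ball about it and conversely, I would first record the working form of (iii): for every $a\in X$ and every $\epsilon>0$ there is $\delta>0$ with $B_2(a,\delta)\subseteq B_1(a,\epsilon)$, together with the symmetric inclusion. I then prove (iii)$\Rightarrow$(i), (i)$\Rightarrow$(iii), and (ii)$\Leftrightarrow$(iii), the last by the same scheme in compressed form.

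For (iii)$\Rightarrow$(i) the argument is symmetric in $d_1,d_2$, so it suffices to show every $d_2$-statistically convergent sequence is $d_1$-statistically convergent to the same limit. Fix such $\tilde x=(x_n)$ with $d_2$-statistical limit $a$ and fix $\epsilon>0$. Choosing $\delta>0$ with $B_2(a,\delta)\subseteq B_1(a,\epsilon)$, the contrapositive inclusion of index sets
\[
\{k\le n:d_1(x_k,a)\ge\epsilon\}\subseteq\{k\le n:d_2(x_k,a)\ge\delta\}
\]
holds for every $n$. The right-hand set has density $0$, and density is monotone under inclusion, so the left-hand set has density $0$ as well; as $\epsilon>0$ was arbitrary, this is $d_1$-statistical convergence to $a$. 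The reverse ball inclusion gives the opposite implication, so (i) holds.

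The substantive direction is (i)$\Rightarrow$(iii), which I would prove by contraposition. If the topologies differ, the ball reformulation fails in one direction, say there are $a\in X$ and $\epsilon_0>0$ with $B_1(a,\epsilon_0)\nsupseteq B_2(a,\delta)$ for every $\delta>0$. Taking $\delta=1/n$ I pick $x_n$ with $d_2(x_n,a)<1/n$ and $d_1(x_n,a)\ge\epsilon_0$. The naive candidate $(x_n)$ is $d_2$-convergent, hence $d_2$-statistically convergent, to $a$; the danger is that $(x_n)$ might still be $d_1$-statistically convergent to some other point $b$, in which case (i) would not be violated. This is the main obstacle, and I would resolve it by interleaving with the constant $a$: set $y_n=x_n$ for odd $n$ and $y_n=a$ for even $n$. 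Then $(y_n)$ is still $d_2$-statistically convergent to $a$. To see it is $d_1$-statistically convergent to no point, suppose it had $d_1$-statistical limit $b$. If $b\neq a$, then along the even indices $d_1(y_k,b)=d_1(a,b)>0$, so $\{k:d_1(y_k,b)\ge d_1(a,b)/2\}$ contains all even numbers and has density $1/2$, a contradiction; hence $b=a$. But then along the odd indices $d_1(y_k,a)=d_1(x_k,a)\ge\epsilon_0$, so $\{k:d_1(y_k,a)\ge\epsilon_0\}$ contains all odd numbers and again has density $1/2$, contradicting convergence to $a$. Thus $(y_n)$ separates the two statistical convergence classes and (i) fails.

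Finally, (ii)$\Leftrightarrow$(iii) follows from the same two ingredients with asymptotic density replaced by the eventual-membership condition: the ball reformulation turns $d_2$-convergence into $d_1$-convergence and conversely, while for the contrapositive the interleaved sequence $(y_n)$ is $d_2$-convergent to $a$ yet $d_1$-convergent to no point, by the identical even/odd argument. I expect the only genuine difficulty to be the uniqueness-of-limit issue in (i)$\Rightarrow$(iii); once the interleaving device is in place, every remaining step is a routine ball comparison or density estimate. The principle underlying the two density-$1/2$ contradictions is just the uniqueness of statistical limits in a metric space, since two distinct limits would force two density-zero index sets to cover all of $\mathbb N$.
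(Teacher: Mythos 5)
Your proposal is correct, and its skeleton coincides with the paper's: you prove (iii)$\Rightarrow$(i) by the ball comparison plus monotonicity of density (the paper phrases this with the complementary sets and a $\liminf$ computation, which is trivially equivalent to your version), you prove (i)$\Rightarrow$(iii) by contraposition using exactly the paper's construction --- points $x_n$ with $d_2(x_n,a)<1/n$ and $d_1(x_n,a)\ge\epsilon_0$, interleaved with the constant $a$ --- and you treat (ii)$\Leftrightarrow$(iii) as a compressed repetition of the same scheme, as does the paper. The one genuine difference is how the two proofs rule out the danger you correctly identify, namely that the interleaved sequence $\tilde y$ might be $d_1$-statistically convergent to some point $b$ other than $a$. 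The paper settles this by invoking Theorem~\ref{theo2} (a subsequence whose index set has positive lower density inherits the statistical limit), which forces the limit to be $a$ via the constant even-indexed subsequence; this is shorter but creates a forward reference to Section~3, which the authors must defuse with the parenthetical remark that the proof of Theorem~\ref{theo2} does not depend on Theorem~\ref{theo2+}. You instead give a direct even/odd argument: if $b\ne a$ the even indices lie in $\{k: d_1(y_k,b)\ge d_1(a,b)/2\}$, and if $b=a$ the odd indices lie in $\{k: d_1(y_k,a)\ge\epsilon_0\}$, so in either case a set that should have density zero does not. This keeps your proof entirely self-contained at the cost of a few extra lines, and it is arguably cleaner logically. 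One cosmetic correction: those two index sets \emph{contain} all even (respectively odd) numbers and possibly more, so you should say they have lower density at least $1/2$ rather than ``density $1/2$''; the contradiction with density zero is of course unaffected.
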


In the rest of this section we prove the following ``weak'' converse of
Proposition~\ref{Prop1}.

\begin{theorem}\label{theo3+}
Let $(X,d)$ be a metric space, $a\in X$ and let $\tilde x=(x_n)\in\tilde X$ be $d$~--~
statistically convergent to $a$. There is $\tilde y=(y_n)\in\tilde X$ such that
$\tilde y\asymp \tilde x$ and $\tilde y$ is convergent to $a.$
\end{theorem}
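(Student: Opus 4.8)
The plan is to make precise the intuition behind Definition~\ref{def2+}: $d$-statistical convergence to $a$ says that for each threshold $\epsilon$ the indices where $x_n$ stays at distance $\ge\epsilon$ from $a$ form a set of density zero. So I should like to overwrite $\tilde x$ by the constant value $a$ on a single exceptional set of density zero, thereby obtaining a genuinely convergent $\tilde y$ that still agrees with $\tilde x$ on a statistical dense set (hence $\tilde y\asymp\tilde x$ by Definition~\ref{def6}). The difficulty is that the exceptional set depends on $\epsilon$: each one has density zero, but the union over $\epsilon=1/j$ need not. The real content is a diagonal construction gluing these countably many density-zero sets into one.

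First I would set, for each $j\in\mathbb{N}$,
$$A_j:=\{k\in\mathbb{N}: d(x_k,a)\ge 1/j\},$$
so that $A_1\subseteq A_2\subseteq\cdots$ and, by Definition~\ref{def2+}, $\lim_{n\to\infty}|A_j(n)|/n=0$ for every fixed $j$, where $A_j(n)=A_j\cap\{1,\dots,n\}$. Using these limits I would pick an increasing sequence $0=m_0<m_1<m_2<\cdots$ of integers with
$$\frac{|A_{j+1}(n)|}{n}<\frac{1}{j+1}\qquad\text{for all } n\ge m_j,$$
which is possible since each $A_{j+1}$ has density zero (take $m_j$ above the corresponding threshold and above $m_{j-1}$). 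Then I would define the exceptional set by cutting each $A_{j+1}$ to its block:
$$S:=\bigcup_{j=0}^{\infty}\bigl(A_{j+1}\cap(m_j,m_{j+1}]\bigr).$$

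The key step is to check that $S$ has density zero, i.e. $\lim_{n\to\infty}|S(n)|/n=0$ with $S(n)=S\cap\{1,\dots,n\}$. For $n$ with $m_p<n\le m_{p+1}$, any $k\in S(n)$ lies in a block $(m_j,m_{j+1}]$ with $j\le p$, hence $k\in A_{j+1}\subseteq A_{p+1}$ by monotonicity; thus $S(n)\subseteq A_{p+1}(n)$ and $|S(n)|/n<1/(p+1)$ by the choice of the $m_j$. Letting $n\to\infty$ forces $p\to\infty$, so $\lim_{n\to\infty}|S(n)|/n=0$. Consequently $M:=\mathbb{N}\setminus S$ has density one, i.e. is a statistical dense subset of $\mathbb{N}$.

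Finally I would define
$$y_n:=\begin{cases} x_n & \text{if } n\in M,\\ a & \text{if } n\in S,\end{cases}$$
so that $\tilde y$ and $\tilde x$ agree on the statistical dense set $M$, giving $\tilde y\asymp\tilde x$. To see $\tilde y\to a$, fix $\epsilon>0$ and choose $j$ with $1/j<\epsilon$. Any $k>m_j$ lies in some block $(m_i,m_{i+1}]$ with $i\ge j$; if $k\in S$ then $y_k=a$, while if $k\notin S$ then $k\notin A_{i+1}$ by the construction of $S$, so $d(x_k,a)<1/(i+1)\le 1/(j+1)<\epsilon$. Hence $d(y_k,a)<\epsilon$ for all $k>m_j$, which is exactly convergence in the sense of Definition~\ref{def1}. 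The main obstacle is the density-zero verification for $S$; once the blockwise choice of the $m_j$ is set up, the rest is bookkeeping.
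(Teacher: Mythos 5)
Your proof is correct, but it takes a genuinely different route from the paper. The paper's proof is a two-step reduction: first it applies Lemma~\ref{lem1+} to pass from the metric sequence $(x_n)$ to the real sequence $(d(x_n,a))$, which is statistically convergent to $0$; then it invokes the known real-valued theorem (Theorem A of Ma\v{c}aj--\v{S}al\'at, or \v{S}al\'at's Lemma 1.1) to obtain a statistically dense set $K\subseteq\mathbb{N}$ along which $d(x_{n(k)},a)\to 0$, and finally patches the sequence by setting $y_n=x_n$ for $n\in K$ and $y_n=a$ otherwise. You instead give a self-contained construction: your blockwise diagonal argument with the nested sets $A_j$ and the cut-offs $m_j$ is essentially the classical proof of the very lemma the paper cites, transplanted directly into the metric setting, and you end with the same patched sequence ($y_n=x_n$ off the exceptional set, $y_n=a$ on it). What the paper's approach buys is brevity and modularity --- Lemma~\ref{lem1+} plus a citation does all the work; what yours buys is independence from the real-valued literature, which is arguably more in the spirit of a paper whose point is that statistical convergence needs no algebraic structure. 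One small indexing quibble: with $m_0=0$ your stated condition for $j=0$ (that $\vert A_1(n)\vert/n<1$ for all $n\ge m_0$) can fail for small $n$ if the first few terms are far from $a$; either impose the condition only for $j\ge 1$, or note that the bound $\vert S(n)\vert/n<1/(p+1)$ is only needed for large $p$, so the finitely many exceptional $n$ in the block $(m_0,m_1]$ do not affect the limit.
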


If $X=\mathbb R$ and $d(x,y)=\vert x - y \vert$ for all $x,y\in X,$ then this result is
known. (See, for example, Theorem A in \cite{MS} or \cite{S}, Lemma 1.1.).

The next simple lemma gives us a tool for the reduction of some questions on the $d$ --
statistical convergence in metric spaces to the case of the statistical convergence in
$\mathbb R.$

\begin{lemma}\label{lem1+}
Let $(X,d)$ be a metric space, $a\in X$ and $\tilde x=(x_n)\in\tilde X.$ Then $\tilde x$
is $d~-~$statistical convergent to $a$ in $X$ if and only if the sequence $(d(x_n,a))$
is statistical convergent to $0$ in $\mathbb R.$
\end{lemma}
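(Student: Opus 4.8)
The plan is to prove Lemma~\ref{lem1+} by directly comparing the two defining limit relations. The statement asserts an ``if and only if,'' but the key observation is that both sides are controlled by \emph{the same counting sets}. Indeed, for every $\epsilon>0$ and every $n\in\mathbb N$, one has the set identity
\begin{equation*}
\{k:k\le n \text{ and } d(x_k,a)\ge \epsilon\} = \{k:k\le n \text{ and } |d(x_k,a)-0|\ge \epsilon\},
\end{equation*}
since $d(x_k,a)\ge 0$, so $d(x_k,a)\ge\epsilon$ is literally the same condition as $|d(x_k,a)-0|\ge\epsilon$. This means the cardinalities appearing in Definition~\ref{def2+} (applied to $\tilde x$ converging to $a$ in $X$) and in the definition of statistical convergence of the real sequence $(d(x_n,a))$ to $0$ are \emph{identical} for every $\epsilon$ and $n$.

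With this identity in hand, the two limit conditions are term-by-term the same expression. First I would recall that $\tilde x$ is $d$--statistical convergent to $a$ precisely when
\begin{equation*}
\lim_{n\to\infty}\frac{1}{n}\bigl|\{k:k\le n \text{ and } d(x_k,a)\ge\epsilon\}\bigr| = 0 \quad \text{for every } \epsilon>0,
\end{equation*}
and that $(d(x_n,a))$ is statistical convergent to $0$ in $\mathbb R$ precisely when
\begin{equation*}
\lim_{n\to\infty}\frac{1}{n}\bigl|\{k:k\le n \text{ and } |d(x_k,a)-0|\ge\epsilon\}\bigr| = 0 \quad \text{for every } \epsilon>0.
\end{equation*}
By the set identity above, these two statements are literally the same, so each implies the other and the equivalence follows immediately.

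There is essentially no obstacle here; the lemma is a reformulation rather than a substantive result, and its value lies in enabling the reduction to the real-valued case needed for Theorem~\ref{theo3+}. The only point requiring a word of care is the nonnegativity of the metric, which is what allows us to drop the absolute value and identify the condition $d(x_k,a)\ge\epsilon$ with $|d(x_k,a)-0|\ge\epsilon$; this is why the target point in $\mathbb R$ must be $0$ rather than an arbitrary real number. I would therefore present the proof in two or three lines, emphasizing the exact coincidence of the counting sets for each fixed $\epsilon$ and $n$, and conclude that the two limits vanish simultaneously.
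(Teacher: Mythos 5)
Your proof is correct and matches the paper's approach: the paper simply states that the lemma ``follows directly from the definitions,'' and your set identity $\{k:k\le n \text{ and } d(x_k,a)\ge \epsilon\} = \{k:k\le n \text{ and } |d(x_k,a)-0|\ge \epsilon\}$, justified by the nonnegativity of the metric, is exactly the definitional unfolding the authors have in mind. Nothing is missing; you have merely made explicit what the paper leaves implicit.
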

The proof follows directly from the definitions.

\vspace{5mm}\emph{Proof of Theorem~\ref{theo3+}.} By Lemma~\ref{lem1+} the sequence $(d(x_n,a))$ is
statistically convergent to $0.$ As has been stated above, Theorem~\ref{theo3+} is well known for $X=\mathbb R$ and $d(x,y)=|x-y|.$ Consequently we can
find a subsequence $(d(x_{n(k)},a))$ of the sequence $(d(x_n,a))$ such that
$\mathop{\lim}\limits_{k\to\infty}d(x_{n(k)},a)=0$ and $K=\{n(k): k\in\mathbb N\}$ is a
dense subset of $\mathbb N.$ Define the sequence $\tilde y=(y_n)\in\tilde X$ as
\begin{equation*}
y_n:=\begin{cases}
         x_n & \mbox{if} $ $ n\in K \\
         a   & \mbox{if} $ $ n\in\mathbb N \setminus K. \\
         \end{cases}
\end{equation*}
It is easy to see that $\tilde y$ is convergent to $a$ and $\tilde y\asymp \tilde x.$  $ \quad \qquad \qquad \qquad \qquad
\qquad \qquad \square$

\section{Statistical convergence of sequences and their subsequences}

If the given sequence is $d$ -- statistical convergent, it is natural to ask how we can
check that its subsequence is $d$ -- statistical convergent to the same limit.

\begin{theorem}
\label{theo2} Let $(X,d)$ be a metric space,  $\tilde x=(x_n)\in \tilde X$  and let $
\tilde x'=(x_{n(k)})$ be a subsequence of $\tilde x$ such that
\begin{equation}\label{3.1*}
\liminf_{n\to\infty}\frac{\mid K_{\tilde x'}(n)\mid}{n}>0.
\end{equation}
If $\tilde x$ is d-statistical
convergent to $a \in X,$ then $\tilde x'$ is also d-statistical convergent to this a.
\end{theorem}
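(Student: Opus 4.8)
The plan is to fix $\epsilon>0$ and directly compare the two counting functions that control the $d$-statistical convergence of $\tilde x$ and of $\tilde x'$. Let $A(\epsilon):=\{m\in\mathbb N: d(x_m,a)\ge\epsilon\}$ be the set of ``bad'' indices of the full sequence, and write $A(\epsilon)(m):=\{j\in A(\epsilon): j\le m\}$. The hypothesis that $\tilde x$ is $d$-statistical convergent to $a$ says exactly that $\lim_{m\to\infty}|A(\epsilon)(m)|/m=0$ for every $\epsilon>0$. What I must establish is $\lim_{N\to\infty}|B_N|/N=0$, where $B_N:=\{k\le N: d(x_{n(k)},a)\ge\epsilon\}$ counts the bad indices among the first $N$ terms of the subsequence $\tilde x'$.

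The decisive move is to reindex, passing from the subsequence index $k$ to the ambient index $m=n(N)$. Because $k\mapsto n(k)$ is strictly increasing, $k\le N$ is equivalent to $n(k)\le n(N)$; hence $\{n(k):k\in B_N\}=A(\epsilon)\cap K_{\tilde x'}(n(N))$, and injectivity of $n(\cdot)$ yields $|B_N|\le|A(\epsilon)(n(N))|$. The same monotonicity gives the exact identity $N=|K_{\tilde x'}(n(N))|$, since $n(1),\dots,n(N)$ are precisely the members of $K_{\tilde x'}$ not exceeding $n(N)$. Putting $m:=n(N)$ and combining these, I obtain
\begin{equation*}
\frac{|B_N|}{N}\le\frac{|A(\epsilon)(m)|}{|K_{\tilde x'}(m)|}=\frac{|A(\epsilon)(m)|}{m}\cdot\frac{m}{|K_{\tilde x'}(m)|}.
\end{equation*}

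To conclude I let $N\to\infty$, so that $m=n(N)\to\infty$ as well. The first factor tends to $0$ because $\tilde x$ is $d$-statistical convergent to $a$. The second factor stays bounded: writing $c:=\liminf_{m\to\infty}|K_{\tilde x'}(m)|/m$, hypothesis \eqref{3.1*} gives $c>0$, so $m/|K_{\tilde x'}(m)|<2/c$ for all sufficiently large $m$. Therefore the product tends to $0$, whence $|B_N|/N\to 0$ and $\tilde x'$ is $d$-statistical convergent to $a$. I anticipate no real obstacle; the only delicate points are keeping the two indexings apart and noticing that the positive lower density in \eqref{3.1*} is used exactly, and only, to bound the factor $m/|K_{\tilde x'}(m)|$. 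This is fully consistent with Example~1, where a subsequence of density zero fails to inherit the statistical limit.
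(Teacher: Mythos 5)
Your proof is correct, and it rests on the same two pillars as the paper's own argument: the inclusion of the subsequence's bad indices into the full sequence's bad indices, and the use of hypothesis \eqref{3.1*} to control the normalizing ratio. The execution, however, differs in two respects, both to your credit. First, the paper works throughout with the quantity $\left\vert\{m(k)\le n : d(x_{m(k)},a)\ge\epsilon\}\right\vert / \left\vert K_{\tilde x'}(n)\right\vert$ indexed by the ambient $n$, and asserts without comment that its vanishing is equivalent to the $d$-statistical convergence of $\tilde x'$, whose definition is stated in terms of the subsequence index $N$; your exact identity $N=\left\vert K_{\tilde x'}(n(N))\right\vert$ together with the equivalence $k\le N \Leftrightarrow n(k)\le n(N)$ is precisely the bookkeeping that justifies that assertion (evaluate the paper's quantity at $n=n(N)$), and you make it explicit. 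Second, where the paper concludes by invoking the inequality $\liminf_{n\to\infty} y_n \cdot \limsup_{n\to\infty} z_n \le \limsup_{n\to\infty} y_n z_n$, cited from an external problem book, you simply observe that \eqref{3.1*} makes $m/\left\vert K_{\tilde x'}(m)\right\vert$ eventually bounded by $2/c$, and then let a null factor times a bounded factor go to zero. Your version is thus self-contained and slightly more careful about the passage between the two indexings; the paper's is marginally shorter, at the cost of an external lemma and an implicit reindexing step.
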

\begin{proof} Suppose that $(x_n)$ is $d$ -- statistical convergent to $a.$
It is clear that
\begin{equation*}
\left\{ m(k):m(k)\leq n,~~d(x_{m(k)},a)\geq \epsilon \right\}
\subseteq \left\{ m:m\leq n,~~d(x_{m},a)\geq \epsilon \right\}
\end{equation*}%
for all $n.$  Consequently we have
\begin{equation*}\label{equ10}
 \frac{1}{\left\vert K_{\tilde x'}(n)\right \vert}\left\vert \left\{ m(k):m(k)\leq
n,~~d(x_{m(k)},a)\geq \epsilon \right\} \right\vert
\end{equation*}
\begin{equation}\label{equ10}
\leq
\frac{1}{\left\vert K_{\tilde x'}(n)\right \vert}\left\vert \left\{
m:m\leq n,~~d(x_{m},a)\geq \epsilon \right\} \right\vert.
\end{equation}%
The sequence $\tilde x=(x_{m(k)})$ is $d$ -- statistical convergent if, for every
$\epsilon>0,$ we have
$$\limsup_{n\to\infty}\frac{\left\vert\{ m(k): m(k) \le n, d(x_{m(k)},a)\ge \epsilon \}\right\vert}{\left\vert K_{\tilde
x'}(n)\right \vert}=0.$$

Using \eqref{equ10}, we see that the last relation holds if
\begin{equation}\label{equ3}
\limsup_{n\to\infty}\frac{\left\vert \{ m: m \le n, d(x_{m},a)\ge
\epsilon \} \right\vert}{\left\vert K_{\tilde x'}(n)\right \vert}=0.
\end{equation}%
To prove this we can apply the inequality%
\begin{equation}\label{equ4}
\liminf_{n\to\infty}y_n \limsup_{n\to\infty}z_n\le \limsup_{n\to\infty}y_{n}z_{n}
\end{equation}%
which holds for all sequences of nonnegative real numbers with
$0\ne\mathop{\liminf}\limits_{n\to\infty}y_{n}\ne\infty$ (see, for
example, \cite{Dem}). Put
$$y_{n}=\frac{\left\vert K_{\tilde x'}(n)\right \vert}{n} \quad
\mbox{and} \quad z_{n}=\frac{\left\vert\{m: m\le n, d(x_m,a)\ge
\epsilon\}\right\vert}{\left\vert K_{\tilde x'}(n)\right \vert}.$$
Inequality \eqref{3.1*} implies $\mathop{\liminf}\limits_{n\to\infty}y_{n}>0.$ Furthermore it is clear that $\mathop{\liminf}\limits_{n\to\infty}y_{n}\le \mathop{\limsup}\limits_{n\to\infty}y_{n}\le 1.$
Now we obtain
$$y_{n}z_{n}=\frac{\left\vert\{m: m\le n, d(x_{m},a)\ge \epsilon\}\right\vert}{n},$$ so
that
\begin{equation*}
\liminf_{n\to\infty}\frac{\left\vert K_{\tilde x'}(n)\right
\vert}{n}\limsup_{n\to\infty}\frac{\left\vert\{m: m\le n,
d(x_{m},a)\ge \epsilon\}\right\vert}{\left\vert K_{\tilde
x'}(n)\right \vert}
\end{equation*}%
\begin{equation*}
\le\limsup_{n\to\infty}\frac{\left\vert\{m: m\le
n, d(x_{m},a)\ge \epsilon\}\right\vert}{n}.
\end{equation*}
The last inequality implies \eqref{equ3} because \eqref{3.1*} holds and $(x_n)$ is $d$ --
statistical convergent. \end{proof}

\begin{example}\label{ex2} Let $x$ and $y$ be distinct points of a metric space $(X,d)$.
Let us consider the sequence $(x_n),$
\begin{equation*}
x_n:=\begin{cases}
         x & \mbox{if} $\quad $ n $ $ \mbox{is even}\\
         y & \mbox{if} $ \quad$ n $ $ \mbox{is odd}, \\
         \end{cases}
\end{equation*}%
and the subsequences
\begin{equation*}
(x_{2n+1})=(y,y,y,y,y,y,...),~~~ \qquad (x_{2n})=(x,x,x,x,x,x,...).
\end{equation*}%
It is clear that the subsequences $(x_{2n})$ and $(x_{2n+1})$ are
$d$-statistical convergent to $x$ and $y$ respectively. Since $x\neq
y$, Theorem \ref{theo2} implies that $(x_n)$ is not $d$-statistical
convergent.
\end{example}

\begin{theorem}
\medskip \label{theo3} Let $(X,d)$ be a metric space and let $\tilde
x\in \tilde X.$  The following statements are equivalent:
\newline (i) The sequence $\tilde x$  is d-statistical convergent; \newline (ii) Every
subsequence $\tilde x'$ of $\tilde x$ with
$$\liminf_{n\to\infty}\frac{\mid K_{\tilde x'}(n)\mid}{n}>0$$ is d-statistical
convergent; \newline (iii) Every dense subsequence $\tilde x'$ of $\tilde x$ is
d-statistical convergent.
\end{theorem}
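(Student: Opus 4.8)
The plan is to prove the three equivalences by the cyclic chain $(i)\Rightarrow(ii)\Rightarrow(iii)\Rightarrow(i)$, since the substantive analytic work has already been done in Theorem~\ref{theo2} and each remaining link reduces to the definitions. Note first that none of the three statements specifies a limit, so there is no need to track whether subsequential limits coincide; this simplifies matters considerably.

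For $(i)\Rightarrow(ii)$ I would simply invoke Theorem~\ref{theo2}. If $\tilde x$ is $d$-statistical convergent, say to $a\in X$, and $\tilde x'=(x_{n(k)})$ is any subsequence with $\liminf_{n\to\infty}|K_{\tilde x'}(n)|/n>0$, then Theorem~\ref{theo2} yields directly that $\tilde x'$ is $d$-statistical convergent (to the same $a$), with no further computation required. For $(ii)\Rightarrow(iii)$ I would unwind Definition~\ref{def4}: a dense subsequence $\tilde x'$ is one for which $K_{\tilde x'}$ is a dense subset of $\mathbb N$, i.e. $\lim_{n\to\infty}|K_{\tilde x'}(n)|/n=1$, whence in particular $\liminf_{n\to\infty}|K_{\tilde x'}(n)|/n=1>0$. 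Thus every dense subsequence satisfies the hypothesis of $(ii)$ and is therefore $d$-statistical convergent.

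The remaining implication $(iii)\Rightarrow(i)$ is where care is needed, and it is the one place the argument could be mis-stated. The clean route is to observe that $\tilde x$ is itself a dense subsequence of $\tilde x$: the identity index sequence $n(k)=k$ is admissible in Definition~\ref{def4} and gives $K_{\tilde x'}=\mathbb N$, which has density $1$ and is therefore dense. Applying $(iii)$ to this particular subsequence yields at once that $\tilde x$ is $d$-statistical convergent. The only subtlety is recognizing that the identity index sequence is permitted, so that the full sequence qualifies as a dense subsequence of itself; if one insisted instead on proper subsequences, $(iii)\Rightarrow(i)$ would have to be established by contraposition, constructing from a non-$d$-statistically-convergent $\tilde x$ a dense index set retaining enough ``bad'' indices to spoil convergence, and that combinatorial construction — rather than anything above — would be the genuine obstacle. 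I expect the trivial-subsequence route to be the intended and cleanest one, making the whole theorem a direct consequence of Theorem~\ref{theo2} together with the definitions.
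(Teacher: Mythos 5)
Your proposal is correct and follows essentially the same route as the paper: the cyclic chain $(i)\Rightarrow(ii)$ via Theorem~\ref{theo2}, $(ii)\Rightarrow(iii)$ by noting a dense index set has $\liminf$ equal to $1>0$, and $(iii)\Rightarrow(i)$ because $\tilde x$ is a dense subsequence of itself. The paper's proof is exactly this, so there is nothing to add.
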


\begin{proof}
The implication $(i)\Rightarrow (ii)$ was proved in Theorem~\ref{theo2}. Since every
dense subsequence $\tilde x'$  of $\tilde x$ satisfies the inequality
$$\liminf_{n\to\infty}\frac{\mid K_{\tilde x'}(n)\mid}{n}>0,$$ we have $(ii)\Rightarrow (iii).$
The implication $(iii)\Rightarrow(i)$ holds because $\tilde x$ is a dense subsequence of
itself.\end{proof}

\begin{lemma}\label{lem1} Let $(X,d)$ be a metric space with $\left\vert X\right \vert \ge 2,$
let $\tilde x=(x_n) \in \tilde X$ and let $\tilde x'=(x_{n(k)})$ be
an infinite subsequence of $\tilde x$ such that
\begin{equation}\label{equ13}
\limsup_{n\to\infty}\frac{\mid K_{\tilde x'}(n)\mid}{n}=0.
\end{equation}
There are a sequence $\tilde y \in \tilde X$ and a subsequence $\tilde y'$ of $\tilde y$
such that: $\tilde x \asymp \tilde y$ and $K_{\tilde y'}=K_{\tilde x'}$ and $\tilde y'$
is not d -- statistical convergent.\end{lemma}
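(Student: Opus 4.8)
The plan is to exploit the freedom granted by the density hypothesis \eqref{equ13}. Since $K_{\tilde x'}$ has upper density zero and the terms are nonnegative, in fact $\lim_{n\to\infty}|K_{\tilde x'}(n)|/n=0$. Writing $M:=\mathbb N\setminus K_{\tilde x'}$ and using $|M(n)|+|K_{\tilde x'}(n)|=n$, we get $|M(n)|/n\to 1$, so $M$ is a statistical dense subset of $\mathbb N$ in the sense of Definition~\ref{def3}. The point is that whatever values I assign to $\tilde y$ on the indices lying in $K_{\tilde x'}$, as long as I keep $y_n=x_n$ for every $n\in M$, the sequences $\tilde x$ and $\tilde y$ will be statistically equivalent by Definition~\ref{def6}. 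The indices in $K_{\tilde x'}$ are therefore entirely at my disposal, and these are exactly the indices carried by the subsequence $\tilde y'$ once I impose $K_{\tilde y'}=K_{\tilde x'}$.

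Next I would fix two distinct points $x,y\in X$ (possible since $|X|\ge 2$), enumerate $K_{\tilde x'}=\{n(1)<n(2)<\cdots\}$ in increasing order, and define
\begin{equation*}
y_m:=\begin{cases}
 x_m & \text{if } m\in M,\\
 x   & \text{if } m=n(k)\text{ with } k\text{ even},\\
 y   & \text{if } m=n(k)\text{ with } k\text{ odd}.
\end{cases}
\end{equation*}
With this definition $\tilde x\asymp\tilde y$ holds automatically, and the subsequence $\tilde y'=(y_{n(k)})_k$ has index set $K_{\tilde y'}=K_{\tilde x'}$, as required.

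It then remains only to check that $\tilde y'=(y_{n(k)})_k$ is not $d$-statistical convergent. By construction this is precisely the sequence taking the value $x$ for even $k$ and $y$ for odd $k$, i.e.\ exactly the alternating sequence analysed in Example~\ref{ex2}. Its even-indexed terms form a constant subsequence equal to $x$ and its odd-indexed terms a constant subsequence equal to $y$, each with lower density $1/2>0$ in $\mathbb N$. If $\tilde y'$ were $d$-statistical convergent to some $a$, Theorem~\ref{theo2} would force both constant subsequences to be $d$-statistical convergent to $a$, whence $a=x$ and $a=y$, contradicting $x\ne y$. (That a constant sequence equal to $x$ is $d$-statistical convergent only to $x$ follows at once by taking $\epsilon=d(x,a)>0$ for any $a\ne x$, so that the relevant set is all of $\mathbb N$.)

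The construction is essentially routine, and I expect no serious obstacle. The only step requiring care is the verification that the density-zero set $K_{\tilde x'}$ is genuinely ``free'' — that altering $\tilde x$ solely on $K_{\tilde x'}$ preserves statistical equivalence — which is exactly the role of observing that $M$ is statistically dense. The heart of the matter is thus the contrast between two densities: statistical equivalence is insensitive to a density-zero set of coordinates, whereas the statistical convergence of the thinned-out subsequence $\tilde y'$ is governed by its own internal index $k$ and can be destroyed at will.
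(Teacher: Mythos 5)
Your proof is correct and is essentially the same as the paper's: both keep $y_n=x_n$ off $K_{\tilde x'}$ (noting that \eqref{equ13} makes $\mathbb N\setminus K_{\tilde x'}$ statistically dense, hence $\tilde x\asymp\tilde y$), alternate two distinct points of $X$ along $K_{\tilde x'}$ according to the parity of $k$, and conclude that the resulting subsequence $\tilde y'$ fails to be $d$-statistical convergent by the argument of Example~\ref{ex2} via Theorem~\ref{theo2}. Your write-up merely makes explicit two details the paper leaves implicit, namely that the upper-density-zero hypothesis forces the full limit to be zero and that a constant sequence is $d$-statistically convergent only to its constant value.
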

\begin{proof} Let $a$ and $b$ be two distinct points of $X.$ Define the sequence $\tilde y=(y_n)\in\tilde X$ by the rule
\begin{equation}\label{y}
y_n:=\begin{cases}
         x_n & \mbox{if} \quad  n\in\mathbb N \setminus K_{\tilde x'}\\
         a & \mbox{if} \quad n=n(k)\in K_{\tilde x'} \, and \, k  \, is \,odd\\
         b & \mbox{if} \quad n=n(k)\in K_{\tilde x'} \, and \, k  \, is \,even.\\
         \end{cases}
\end{equation} The set $\mathbb N \setminus K_{\tilde x'}$ is a statistical dense
subset of $\mathbb N.$ Indeed, the equality
\begin{equation*}
n=\left\vert\{m\in K_{\tilde x'}: m\le n\}\right\vert + \left\vert\{m\in \mathbb N
\setminus K_{\tilde x'}: m\le n\}\right\vert
\end{equation*} holds for each $n\in\mathbb N.$ It implies the inequalities
\begin{equation*}\label{equ15}
\liminf_{n\to\infty}\frac{\left\vert\{m\in \mathbb N\setminus K_{\tilde x'}: m\le
n\}\right\vert}{n}=\liminf_{n\to\infty}\left( 1-\frac{\left\vert\{m\in K_{\tilde x'}: m\le
n\}\right\vert}{n}\right)
\end{equation*}

\begin{equation*}\label{equ15}
= 1-\limsup_{n\to\infty}\frac{\left\vert\{m\in K_{\tilde x'}: m\le
n\}\right\vert}{n}.
\end{equation*}
Using \eqref{equ13} we obtain
\begin{equation*}
1 = \liminf_{n\to\infty}\frac{\left\vert\{m\in \mathbb N\setminus K_{\tilde x'}: m\le
n\}\right\vert}{n}\le \limsup_{n\to\infty}\frac{\left\vert\{m\in \mathbb N\setminus
K_{\tilde x'}: m\le n\}\right\vert}{n}\le 1.
\end{equation*} Consequently
\begin{equation*}
\lim_{n\to\infty} \frac{\left\vert\{m\in \mathbb N\setminus K_{\tilde x'}: m\le
n\}\right\vert}{n}=1.
\end{equation*}
Thus $\tilde x \asymp \tilde y.$
Define the desired subsequence $\tilde y'$ of $\tilde y$ as $\tilde
y'=(y_{n(k)}),$ (see \eqref{y}). As in Example~\ref{ex2} we see that
$\tilde y'$ is not $d$ -- statistical convergent.
 \end{proof}

\begin{lemma}\label{lem2}Let $(X,d)$ be a metric space, $a\in X,$ $\tilde x$ and $\tilde y$ belong to $\tilde X$ and let $\tilde x$ be a d -- statistical convergent to $a$ sequence. If $\tilde x \asymp \tilde y,$ then $\tilde y$ is also d -- statistical convergent to $a.$\end{lemma}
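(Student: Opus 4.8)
The plan is to use that statistically equivalent sequences differ only on a density–zero set, so that the counting sets which govern the $d$ -- statistical convergence of $\tilde y$ agree with those of $\tilde x$ up to a negligible correction. First I would unpack the hypothesis $\tilde x\asymp\tilde y$: by Definition~\ref{def6} there is a statistical dense set $M\subseteq\mathbb N$ with $x_n=y_n$ for every $n\in M$. By Definition~\ref{def3} this means $\lim_{n\to\infty}\frac{|M(n)|}{n}=1$, and since $|M(n)|+|(\mathbb N\setminus M)(n)|=n$ for each $n$, I obtain
\[
\lim_{n\to\infty}\frac{|(\mathbb N\setminus M)(n)|}{n}=0.
\]

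Next, fix $\epsilon>0$ and write $B_y(n):=\{k\le n:\ d(y_k,a)\ge\epsilon\}$ and $B_x(n):=\{k\le n:\ d(x_k,a)\ge\epsilon\}$. Splitting $B_y(n)$ according to whether $k\in M$ or $k\in\mathbb N\setminus M$, and using $y_k=x_k$ on $M$, I get the inclusion
\[
B_y(n)\subseteq B_x(n)\cup(\mathbb N\setminus M)(n),
\]
whence
\[
\frac{|B_y(n)|}{n}\le\frac{|B_x(n)|}{n}+\frac{|(\mathbb N\setminus M)(n)|}{n}.
\]

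Finally I would let $n\to\infty$: the first term on the right tends to $0$ because $\tilde x$ is $d$ -- statistical convergent to $a$ (Definition~\ref{def2+}), while the second tends to $0$ by the density of $M$ established above. Hence $\lim_{n\to\infty}\frac{|B_y(n)|}{n}=0$, and since $\epsilon>0$ was arbitrary, $\tilde y$ is $d$ -- statistical convergent to $a$. There is no substantial obstacle in this argument; the only point requiring care is the set inclusion in the splitting step, where one must verify that every $k\in B_y(n)$ with $k\in M$ indeed lies in $B_x(n)$ (which is precisely the content of $y_k=x_k$), so that the contributions coming from $M$ and from its complement can be bounded separately.
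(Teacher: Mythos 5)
Your proof is correct and follows essentially the same route as the paper: both arguments split the set $\{k\le n:\ d(y_k,a)\ge\epsilon\}$ into the indices where the two sequences agree (absorbed into the corresponding set for $\tilde x$) and the density-zero set of indices where they differ, then pass to the limit. The only differences are cosmetic — the paper labels $M$ as the disagreement set rather than the agreement set and phrases the final step via subadditivity of $\limsup$, but the decomposition and the estimates are identical.
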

\begin{proof}
Suppose that $\tilde y \asymp \tilde x.$ Define a subset $M$ of the set $\mathbb N$ as
$$(n\in M) \Leftrightarrow (x_n \ne y_n).$$ Then, by Definition~\ref{def6}, $\mathbb N \setminus M$ is statistical dense. It implies
the equality
\begin{equation}\label{equ17}
\lim_{n\to\infty}\frac{\left\vert\{m\in M: m\le n\}\right\vert}{n}=0.
\end{equation}
Let $\epsilon$ be a strictly positive number. It follows directly from the definition of
the set $M$ that the inclusion
\begin{equation}
\{m\in\mathbb N: m\le n \, and \, d(y_m,a) \ge \epsilon \} \end{equation}
\begin{equation*}
\subseteq \{m\in M : m\le n \}\cup \{m\in \mathbb N : m\le n  \, and \, d(x_m, a)\ge
\epsilon\}
\end{equation*} holds for each $n\in\mathbb N.$ Using this inclusion and equality
\eqref{equ17} we obtain
$$
\limsup_{n\to\infty}\frac{\left\vert \{m\in\mathbb N: m\le n \, and \, d(y_m,a) \ge
\epsilon \}\right \vert }{n}$$ $$\le \limsup_{n\to\infty}\frac{\left\vert\{m\in M: m\le
n \} \right \vert}{n}+\limsup_{n\to\infty}\frac{\left\vert\{m\in \mathbb N : m\le n \,
and \, d(x_m, a)\ge \epsilon\}\right \vert }{n}
$$
$$=\limsup_{n\to\infty}\frac{\left\vert\{m\in \mathbb N : m\le n
\, and \, d(x_m, a)\ge \epsilon\}\right \vert }{n}.$$ Since $\tilde x$ is $d$ --
statistical convergent to $a,$ we have
$$\limsup_{n\to\infty}\frac{\left\vert\{m\in \mathbb N : m\le n
\, and \, d(x_m, a)\ge \epsilon\}\right \vert }{n}=0$$ for every $\epsilon >0.$
Consequently the inequality
\begin{equation}\label{equ19}
\limsup_{n\to\infty}\frac{\left\vert \{m\in\mathbb N: m\le n \, and \, d(y_m,a) \ge
\epsilon \}\right \vert }{n}\le 0
\end{equation}
holds for every $\epsilon >0.$ Using \eqref{equ19} we obtain
\begin{equation*}
0\le \liminf_{n\to\infty}\frac{\left\vert \{m\in\mathbb N: m\le n \, and \, d(y_m,a) \ge
\epsilon \}\right \vert }{n}
\end{equation*}
\begin{equation*}
\le \limsup_{n\to\infty}\frac{\left\vert \{m\in\mathbb N: m\le n \,
and \, d(y_m,a) \ge \epsilon \}\right \vert }{n}\le 0.
\end{equation*}
Hence the limit relation \begin{equation*} \lim_{n\to\infty}\frac{\left\vert
\{m\in\mathbb N: m\le n \, and \, d(y_m,a) \ge \epsilon \}\right \vert }{n}=0
\end{equation*} holds. It still remains to note that the last limit
relation holds for every $\epsilon >0$ if and only if $\tilde y$ is $d$ -- statistical
convergent to $a.$ \end{proof}
\begin{theorem}\label{theo5}
Let $(X,d)$ be a metric space with $\left\vert X \right\vert \ge 2,$ $a\in X,$ and let
$\tilde x \in \tilde X$ be d -- statistical convergent to $a$. Then for every infinite
subsequence $\tilde x'$ of $\tilde x$ with
\begin{equation*}
\limsup_{n\to\infty}\frac{\mid K_{\tilde x'}(n)\mid}{n}=0
\end{equation*} there are a sequence $\tilde y \in \tilde X$ and a subsequence $\tilde
y'$ of $\tilde y$ such that: \newline (i) $\tilde y \asymp \tilde x$ and $K_{\tilde x'}
= K_{\tilde y'};$ \newline (ii) $\tilde y$ is d -- statistical convergent to $a;$
\newline (iii) $\tilde y'$ is not d -- statistical convergent.
\end{theorem}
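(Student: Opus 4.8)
The plan is to obtain Theorem~\ref{theo5} as a direct combination of Lemma~\ref{lem1} and Lemma~\ref{lem2}, since essentially all of the combinatorial work has already been carried out in those two lemmas. The statement is really an assembly of their conclusions, so the proof should be short.

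First I would apply Lemma~\ref{lem1} to the given sequence $\tilde x$ and its subsequence $\tilde x'$. This is legitimate because the hypotheses of Lemma~\ref{lem1} coincide exactly with those of the theorem: we have $\left\vert X\right\vert \ge 2$ together with $\limsup_{n\to\infty}\frac{\mid K_{\tilde x'}(n)\mid}{n}=0$. The lemma then supplies a sequence $\tilde y\in\tilde X$ and a subsequence $\tilde y'$ of $\tilde y$ with $\tilde y\asymp\tilde x$, with $K_{\tilde y'}=K_{\tilde x'}$, and with $\tilde y'$ not $d$~--~statistical convergent. This at once establishes statement (i) (both the equivalence $\tilde y\asymp\tilde x$ and the equality of the index sets) and statement (iii).

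It remains only to verify (ii). Here I would invoke Lemma~\ref{lem2}: by hypothesis $\tilde x$ is $d$~--~statistical convergent to $a$, and from the previous step $\tilde x\asymp\tilde y$, the relation $\asymp$ being symmetric by Definition~\ref{def6}. Lemma~\ref{lem2} therefore guarantees that $\tilde y$ is $d$~--~statistical convergent to the same limit $a$, which is precisely statement (ii).

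Because both lemmas are already available, there is no genuine obstacle at this stage; the entire content has been pushed into the earlier results. The substantive points lie inside those lemmas rather than in the present argument: Lemma~\ref{lem1} does the real work by replacing $\tilde x$ on the sparse index set $K_{\tilde x'}$ with an alternating pattern between two distinct points of $X$, which forces the subsequence $\tilde y'$ to diverge while keeping $\tilde y\asymp\tilde x$ (the complement $\mathbb N\setminus K_{\tilde x'}$ being statistically dense), and Lemma~\ref{lem2} records the stability of $d$~--~statistical convergence under statistically negligible perturbations. The only item meriting a word of care in the assembly is the symmetry of $\asymp$, which is immediate from Definition~\ref{def6}.
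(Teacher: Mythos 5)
Your proposal is correct and matches the paper's own proof essentially verbatim: Lemma~\ref{lem1} supplies $\tilde y$ and $\tilde y'$ satisfying (i) and (iii), and Lemma~\ref{lem2}, applied via $\tilde x \asymp \tilde y$, yields (ii). Nothing further is needed.
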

\begin{proof}
By Lemma~\ref{lem1} there are $\tilde y$ and $\tilde y'$ such that $(i)$ and $(iii)$
hold. To prove $(ii)$ note that $(i)\Rightarrow (\tilde y \asymp \tilde x)$ and $\tilde x$ is a
$d$ -- statistical convergent to a sequence. Consequently, by Lemma~\ref{lem2}, $\tilde
y$ is also $d$ -- statistical convergent to $a.$
\end{proof}

Using this theorem we obtain the following ``weak'' converse of Theorem~\ref{theo2}.
\begin{theorem}
Let $(X,d)$ be a metric space with $\left\vert X \right\vert \ge 2$ and let $\tilde x\in
\tilde X$ be a $d$~--~statistical convergent sequence. Assume $\tilde x'$ is a
subsequence of $ $ $\tilde x$ having the following property: if $\tilde y \asymp \tilde
x$ and $\tilde y'$ is a subsequence of $\tilde y$ such that $K_{\tilde x'}=K_{\tilde
y'},$ then $\tilde y'$ is d -- statistical convergent. Then the inequality
\begin{equation}\label{equ20}
\limsup_{n\to\infty}\frac{\left\vert K_{\tilde x'}(n)\right\vert}{n} >0
\end{equation} holds.
\end{theorem}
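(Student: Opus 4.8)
The plan is to argue by contraposition, reducing the entire statement to Theorem~\ref{theo5}. Since the upper density $\limsup_{n\to\infty}|K_{\tilde x'}(n)|/n$ is always nonnegative (and bounded by $1$), the negation of \eqref{equ20} is precisely the equality $\limsup_{n\to\infty}|K_{\tilde x'}(n)|/n=0$. So I would assume this equality and deduce that $\tilde x'$ fails the robustness property hypothesized in the theorem, which is exactly the contrapositive of what is to be proved.

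First I would fix a point $a\in X$ to which $\tilde x$ is $d$-statistical convergent; such a point exists by hypothesis, and no uniqueness of the limit is needed for the argument. I would then note that, by Definition~\ref{def4}, the subsequence $\tilde x'$ is indexed by an infinite strictly increasing sequence of natural numbers, so $K_{\tilde x'}$ is an infinite subset of $\mathbb N$. Together with the standing assumption $|X|\ge 2$, this verifies every hypothesis of Theorem~\ref{theo5}: the space satisfies $|X|\ge 2$, the sequence $\tilde x$ is $d$-statistical convergent to $a$, and $\tilde x'$ is an infinite subsequence with vanishing upper density $\limsup_{n\to\infty}|K_{\tilde x'}(n)|/n=0$.

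Applying Theorem~\ref{theo5} then produces a sequence $\tilde y\in\tilde X$ and a subsequence $\tilde y'$ of $\tilde y$ satisfying $\tilde y\asymp\tilde x$, $K_{\tilde x'}=K_{\tilde y'}$, and — crucially — the property that $\tilde y'$ is \emph{not} $d$-statistical convergent. The existence of precisely such a pair $(\tilde y,\tilde y')$ directly contradicts the assumed property of $\tilde x'$, which demands that every $\tilde y\asymp\tilde x$ admitting a subsequence $\tilde y'$ with $K_{\tilde x'}=K_{\tilde y'}$ force $\tilde y'$ to be $d$-statistical convergent. This contradiction rules out the equality $\limsup_{n\to\infty}|K_{\tilde x'}(n)|/n=0$, and hence \eqref{equ20} holds.

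There is essentially no computational obstacle here, since the genuine work — constructing the statistically equivalent sequence $\tilde y$ together with a non-statistically-convergent subsequence $\tilde y'$ supported on $K_{\tilde x'}$ — is already carried out in Theorem~\ref{theo5} (which in turn rests on Lemma~\ref{lem1} and Lemma~\ref{lem2}). The only point requiring care is the bookkeeping of the contrapositive: one must confirm that the conclusion of Theorem~\ref{theo5} matches, term for term, the negation of the robustness property assumed for $\tilde x'$, and that $\tilde x'$ is genuinely infinite so that Theorem~\ref{theo5} is applicable.
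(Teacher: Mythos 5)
Your proof is correct and follows essentially the same route as the paper: both observe that the negation of \eqref{equ20} is the vanishing of the upper density, then apply Theorem~\ref{theo5} to produce a pair $(\tilde y,\tilde y')$ contradicting the assumed property of $\tilde x'$. Your additional verification that $\tilde x'$ is infinite and that a statistical limit $a$ exists is sound bookkeeping that the paper leaves implicit.
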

\begin{proof}
For $\tilde x'$ we have either \eqref{equ20} or
\begin{equation*}
\limsup_{n\to\infty}\frac{\left\vert K_{\tilde x'}(n)\right\vert}{n} =0.
\end{equation*} If the last equality holds, then by Theorem~\ref{theo5} there are $\tilde y$ and $\tilde y'$ such that $\tilde y\asymp\tilde x,$ $K_{\tilde x'}=K_{\tilde y'}$ and $\tilde y'$ is not $d$ --
statistical convergent. It contradicts the assumption of the theorem. \end{proof}
Similarly we have a ``weak'' converse of Theorem~\ref{theo5}.
\begin{theorem}\label{theo7}
Let $(X,d)$ be a metric space, $a\in X,$ and let $\tilde x \in \tilde X$ be a d --
statistical convergent to a sequence. Suppose $\tilde x' = (x_{n(k)})$ is a subsequence
of $\tilde x$ for which there are $\tilde y \in \tilde X$ and $\tilde y'$ such that
conditions $(i)$ and $(iii)$ of Theorem~\ref{theo5} hold. Then we have the equality
\begin{equation}\label{equ21}
\liminf_{n\to\infty}\frac{\left\vert K_{\tilde x'}(n)\right\vert}{n} =0.
\end{equation}
\end{theorem}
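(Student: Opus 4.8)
The plan is to argue by contraposition. Since $\liminf_{n\to\infty}\frac{|K_{\tilde x'}(n)|}{n}$ is always nonnegative, establishing \eqref{equ21} is equivalent to ruling out the possibility that $\liminf_{n\to\infty}\frac{|K_{\tilde x'}(n)|}{n} > 0$. So I would assume this strict inequality holds and derive a contradiction with the existence of $\tilde y$ and $\tilde y'$ satisfying conditions $(i)$ and $(iii)$ of Theorem~\ref{theo5}.

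First I would invoke condition $(i)$, which supplies $\tilde y \asymp \tilde x$. Since $\tilde x$ is $d$~--~statistical convergent to $a$ by hypothesis, Lemma~\ref{lem2} immediately yields that $\tilde y$ is $d$~--~statistical convergent to the same point $a$.

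Next, condition $(i)$ also gives $K_{\tilde y'} = K_{\tilde x'}$, whence $|K_{\tilde y'}(n)| = |K_{\tilde x'}(n)|$ for every $n$ and therefore $\liminf_{n\to\infty}\frac{|K_{\tilde y'}(n)|}{n} = \liminf_{n\to\infty}\frac{|K_{\tilde x'}(n)|}{n} > 0$. Regarding $\tilde y'$ as a subsequence of the $d$~--~statistical convergent sequence $\tilde y$ that satisfies this positive lower-density bound, Theorem~\ref{theo2} forces $\tilde y'$ to be $d$~--~statistical convergent (necessarily to $a$). This contradicts condition $(iii)$, and the contraposition is complete.

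There is no genuine obstacle here: the statement is obtained by directly splicing Lemma~\ref{lem2} (to transport convergence from $\tilde x$ to its statistical equivalent $\tilde y$) with the forward implication of Theorem~\ref{theo2} (for subsequences of positive lower density). The only point that merits a moment's care is that the conclusion is phrased with $\liminf$ rather than $\limsup$; this is exactly what makes the argument close, because the negation to be contradicted, namely $\liminf_{n\to\infty}\frac{|K_{\tilde x'}(n)|}{n} > 0$, coincides precisely with the density hypothesis \eqref{3.1*} required by Theorem~\ref{theo2}.
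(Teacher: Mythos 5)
Your proof is correct, but it takes a genuinely different route from the paper's. Both arguments negate \eqref{equ21} to the inequality $\liminf_{n\to\infty}\left\vert K_{\tilde x'}(n)\right\vert/n>0$, which is exactly hypothesis \eqref{3.1*} of Theorem~\ref{theo2}, and then contradict condition $(iii)$; the difference is the order in which convergence is transported across the equivalence $\asymp$ and restricted to the subsequence. The paper restricts first: it applies Theorem~\ref{theo2} to the pair $(\tilde x,\tilde x')$ to get that $\tilde x'$ is $d$~--~statistical convergent to $a$, and then must move this conclusion over to $\tilde y'$; for that it invokes Lemma~\ref{lem3} (whose hypothesis is precisely the positive lower density assumption) to conclude $\tilde x'\asymp\tilde y'$, and finally applies Lemma~\ref{lem2} to the pair of subsequences. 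You transport first: Lemma~\ref{lem2} applied to $\tilde x\asymp\tilde y$ gives that $\tilde y$ is $d$~--~statistical convergent to $a$, and then Theorem~\ref{theo2} applied to $(\tilde y,\tilde y')$ finishes, which is legitimate because $K_{\tilde y'}=K_{\tilde x'}$ makes the density hypothesis transfer verbatim. Your arrangement bypasses Lemma~\ref{lem3} entirely (the only place in the paper where that lemma is used), so it is shorter and requires one fewer auxiliary result; what the paper's route buys is the isolation, in Lemma~\ref{lem3}, of the quantitative fact that statistical equivalence of sequences descends to subsequences along index sets of positive lower density --- a statement of independent interest, but not actually needed for Theorem~\ref{theo7}.
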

To prove this result we use the next lemma.

\begin{lemma}\label{lem3}
Let $(X,d)$ be a metric space, $\tilde x$ and $\tilde y$ belong to $\tilde X$ and let
$\tilde x \asymp \tilde y.$ If $K$ is a subset of $\mathbb N$ such that
\begin{equation}\label{equ22}
\liminf_{n\to\infty}\frac{\left\vert K(n)\right\vert}{n} >0
\end{equation} and if $\tilde x' = (x_{n(k)})$ and $\tilde y' = (y_{n(k)})$ are subsequences
of $\tilde x$ and, respectively, of $\tilde y$ such that $K_{\tilde x'}=K_{\tilde
y'}=K,$ then the relation $\tilde y' \asymp \tilde x'$ holds.
\end{lemma}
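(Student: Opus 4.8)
The plan is to prove Lemma~\ref{lem3} by establishing that the set on which $\tilde x'$ and $\tilde y'$ disagree is ``statistically small relative to $K$'', and then translating this into a statement about a statistical dense subset of $\mathbb N$ (the index set of the subsequences, which is naturally $\mathbb N$ itself since $\tilde x' = (x_{n(k)})$ is indexed by $k$). First I would introduce the disagreement set $M := \{m \in \mathbb N : x_m \ne y_m\}$. Since $\tilde x \asymp \tilde y$, Definition~\ref{def6} gives that $\mathbb N \setminus M$ is statistical dense, which (as in the proof of Lemma~\ref{lem2}, see \eqref{equ17}) is equivalent to
\begin{equation*}
\lim_{n\to\infty}\frac{\left\vert\{m\in M: m\le n\}\right\vert}{n}=0.
\end{equation*}

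Next I would pass to the index set of the subsequences. The subsequence $\tilde x' = (x_{n(k)})$ disagrees with $\tilde y' = (y_{n(k)})$ precisely at those $k$ for which $n(k) \in M$, i.e.\ on the set $L := \{k \in \mathbb N : n(k) \in M\}$. To show $\tilde x' \asymp \tilde y'$ by Definition~\ref{def6}, it suffices to prove that $\mathbb N \setminus L$ is a statistical dense subset of $\mathbb N$, equivalently that
\begin{equation*}
\lim_{k\to\infty}\frac{\left\vert\{j\in L: j\le k\}\right\vert}{k}=0.
\end{equation*}
The key observation relating the two density scales is that $\{j \in L : j \le k\}$ corresponds exactly to $\{m \in M \cap K : m \le n(k)\}$, whose cardinality is bounded above by $\left\vert\{m \in M : m \le n(k)\}\right\vert$, while the denominator $k$ equals $\left\vert K(n(k))\right\vert$ by the definition of $K_{\tilde x'} = K$.

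The main obstacle, and the only place where hypothesis \eqref{equ22} is used, is converting a density-zero statement along the sparse index $n(k)$ into the required zero density in $k$. I would write
\begin{equation*}
\frac{\left\vert\{m \in M \cap K : m \le n(k)\}\right\vert}{k}
\le \frac{\left\vert\{m \in M : m \le n(k)\}\right\vert}{\left\vert K(n(k))\right\vert}
= \frac{\left\vert\{m \in M : m \le n(k)\}\right\vert}{n(k)} \cdot \frac{n(k)}{\left\vert K(n(k))\right\vert}.
\end{equation*}
As $k \to \infty$ the first factor tends to $0$ since $n(k) \to \infty$ and $M$ has density zero; the second factor $n(k)/\left\vert K(n(k))\right\vert$ stays bounded because \eqref{equ22} forces $\liminf_{n\to\infty} \left\vert K(n)\right\vert/n > 0$, so its reciprocal is bounded along any subsequence of indices and in particular along $n(k)$. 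Hence the product tends to $0$, giving $\tilde x' \asymp \tilde y'$. I would close by remarking that without \eqref{equ22} the factor $n(k)/\left\vert K(n(k))\right\vert$ could blow up and the conclusion may fail, which is exactly why the positive lower density is imposed.
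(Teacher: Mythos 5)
Your proof is correct and follows essentially the same route as the paper's own: both bound the disagreement set of the subsequences by the full disagreement set $\{n\in\mathbb N : x_n \ne y_n\}$, whose density is zero because $\tilde x \asymp \tilde y$, and both use hypothesis \eqref{equ22} to keep the scale factor $n/\left\vert K(n)\right\vert$ bounded. The differences are only presentational: you evaluate the relevant ratio along the indices $m = n(k)$ and make explicit, via the identity $k = \left\vert K(n(k))\right\vert$, the reduction that the paper states without justification as \eqref{equ23}, and you replace the paper's appeal to the inequality $\limsup_{m} a_m b_m \le \limsup_{m} a_m \cdot \limsup_{m} b_m$ by the simpler observation that a null sequence times a bounded sequence is null.
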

\begin{proof}
It is sufficient to show that
\begin{equation}\label{equ23}
\limsup_{m\to\infty}\frac{\left\vert\{n(k) \in K: x_{n(k)}\ne y_{n(k)} \, and \, n(k)\le
m\}\right\vert}{\left\vert K(m)\right\vert}=0.
\end{equation} Since the inclusion
\begin{equation*}
\{n(k) \in K: x_{n(k)}\ne y_{n(k)} \, and \, n(k)\le m\}\subseteq\{n \in \mathbb N:
x_{n}\ne y_{n} \, and \, n\le m\}
\end{equation*}holds for every $m\in\mathbb N,$ we have
\begin{equation}\label{equ24}
\limsup_{m\to\infty}\frac{\left\vert\{n(k) \in K: x_{n(k)}\ne y_{n(k)} \, and \, n(k)\le
m\}\right\vert}{\left\vert K(m)\right\vert} \end{equation}
\begin{equation*}\le\limsup_{m\to\infty}\frac{\left\vert\{n\in\mathbb N:
x_n \ne y_n \, and \, n\le m\} \right\vert}{{\left\vert K(m)\right\vert}}
\end{equation*}
\begin{equation*}
\le\limsup_{m\to\infty}\frac{m}{\left\vert
K(m)\right\vert}\limsup_{m\to\infty}\frac{\left\vert\{n\in\mathbb N: x_n \ne y_n \, and
\, n\le m\} \right\vert}{m}
\end{equation*}
\begin{equation*}
=\frac{\mathop{\limsup}\limits_{m\to\infty}\frac{\left\vert\{n\in\mathbb N:\, x_n \ne y_n
\, and \, n\le m\} \right\vert}{m}}{\mathop{\liminf}\limits_{m\to\infty}\frac{\left\vert
K(m) \right \vert}{m}}.
\end{equation*} Inequality \eqref{equ22} implies that
\begin{equation}\label{equ25}
0 \le\frac{1} {\mathop{\liminf}\limits_{m\to\infty}\frac{\left\vert K(m) \right
\vert}{m}}<+\infty.
\end{equation}Moreover we have
\begin{equation*}
\limsup_{m\to\infty}\frac{\left\vert\{n\in\mathbb N: x_n \ne y_n \, and \, n\le m\}
\right\vert}{m}=0
\end{equation*} because $\tilde x\asymp\tilde y.$
Now \eqref{equ23} follows from the last equality, \eqref{equ24} and \eqref{equ25}.
\end{proof}

\emph{Proof of Theorem~\ref{theo7}.} For $\tilde x'$ we have either \eqref{equ21} or
\begin{equation}\label{equ26}
\liminf_{n\to\infty}\frac{\left\vert K_{\tilde x'}(n) \right\vert}{n}>0.
\end{equation}
It suffices to show that the last inequality contradicts the conditions of
Theorem~\ref{theo7}. Let $\tilde y\in\tilde X$ and $\tilde y'$ be a sequence and its
subsequence such that conditions $(i)$ and $(iii)$ of Theorem~\ref{theo5} hold. By
condition $(i)$ we have $K_{\tilde x'}=K_{\tilde y'}$ and
$\tilde x\asymp\tilde y.$ \, Now \newline using \eqref{equ26} and Lemma~\ref{lem3}, we obtain  that $\tilde x'\asymp\tilde y'.$ Moreover, applying Theorem~\ref{theo2}, we see that $\tilde
x'$ is $d$ -- statistical convergent to $a.$ Since $\tilde x'\asymp\tilde y',$
Lemma~\ref{lem2} shows that $\tilde y'$ is also $d$ -- statistical convergent to $a,$
contrary to condition $(iii)$ of \, Theorem~\ref{theo5}.$ \qquad \qquad
\qquad \qquad \qquad \qquad \qquad \qquad\qquad \qquad\qquad \qquad\qquad \qquad\qquad \qquad\Box$

\bigskip


\begin{thebibliography}{99}
\bibitem{Dovgoshey3}   ABDULLAYEV, F. G. --  DOVGOSHEY, O. --  K\"{U}\c{C}\"{U}%
KASLAN, M.: \emph{Metric spaces with unique pretangent spaces. Conditions of the uniqueness}, Ann.
Acad. Sci. Fenn. Math. \textbf{36}, (2011), 353 -- 392.

\bibitem{Dem} BARANENKOV, G. S. -- DEMIDOVICH, B. P. --  EFIMENKO, V. A. --  KOGAN, S. M. --  LUNTS, G. L. --  PORSHNEVA, E. F. -- SYCHEVA, E. P. --  FROLOV, S. V. --  SHOSTAK, R. Ja. --  YANPOLSKY, A. R.: \emph{Problems in mathematical analysis,}
Mir Publishers, edited by B. P. Demidovich,  Moscow, 1976.

\bibitem{Cervanansky}  CERVANANSKY, J.: \emph{Statistical covergence and statistical
continuity}, Zbornik vedeckych prac MtF STU \textbf{6}, (1943), 924 -- 931.

\bibitem{Connor} CONNOR, J.: \emph{The statistical and strong p-Cesaro convergence
of sequences}, Analysis \textbf{8}, (1998), 207 -- 212.

\bibitem{Dor} DORDOVSKI, D. V.: \emph{Metric tangent spaces to Euclidean spaces}, Journal of Math. Sciences \textbf{179}, (2011), 229 -- 244.

\bibitem{Dovgoshey2} DOVGOSHEY, O.: \emph{Tangent spaces to metric spaces and to
their subspaces}, Ukr. Mat. Visn. \textbf{5}, (2008), 468 -- 485.

\bibitem{DAK} DOVGOSHEY, O. -- ABDULLAYEV, F. G. --  K\"{U}\c{C}\"{U}%
KASLAN, M.: \emph{Compactness and boundedness of tangent spaces to metric spaces}, Beitr\"{a}ge Algebra Geom. \textbf{51}, (2010), 547 -- 576.

\bibitem{DD} DOVGOSHEY, O. -- DORDOVSKI, D.: \emph{Ultrametricity and metric betweenness in
tangent spaces to metric spaces}, P-Adic Numbers Ultrametric Anal. Appl.  \textbf{2}, (2010), 100
-- 113.

\bibitem{Dovgoshey1} DOVGOSHEY, O. -- MARTIO, O.: \emph{Tangent spaces to metric
spaces}, Reports in Math. Helsinki Univ. \textbf{480}, (2008).

\bibitem{DM} DOVGOSHEY, O. -- MARTIO, O.: \emph{Tangent spaces to the general metric
spaces}, Rev. Roumaine Math. Pures Appl. \textbf{56}, (2011), 137 -- 155.

\bibitem{Fast} FAST, H.: \emph{Sur la convergence statistique}, Colloquium
Mathematicum \textbf{2}, (1951), 241 -- 244.

\bibitem{Fridy} FRIDY, J. A.: \emph{On statistical convergence}, Analysis \textbf{5}, (1995),
 301 -- 313.

\bibitem{FK} FRIDY, J. A. -- KHAN, M. K.:  \emph{Tauberian theorems via statistical convergence},
J. Math. Anal. Appl. \textbf{228}, (1998), 73 -- 95.

\bibitem{Fridy1} FRIDY, J. A. -- MILLER, H.I.: \emph{A Matrix characterization of
statistical convergence}, Analysis \textbf{11}, (1991), 59 -- 66.

\bibitem{Heinonen}  HEINONEN, J.: \emph{Lectures on Analysis on Metric Spaces},
Springer, New York, Berlin, Heidelberd, 2001.

\bibitem{MS}  MA\v{C}AJ, M. --  \v{S}AL\'{A}T, T.: \emph{Statistical convergence of subsequence of a
given sequence}, Mathematica Bohemica \textbf{126}, (2001), 191 --
208.

\bibitem{Miller} MILLER, H.I.: \emph{A measure theoretical subsequence
characterization of statistical convergence}, Transactions of the AMS \textbf{347}, (1995), 1811 -- 1819.

\bibitem{Papadopoulos}  PAPADOPOULOS, A.: \emph{Metric Spaces}. \emph{Convexity and
Nonpositive Curvature}, Europen Mathematical Society, 2005.

\bibitem{S} \v{S}AL\'{A}T, T.: \emph{On statistically convergent sequences of real numbers},
Math. Slovaca \textbf{30}, (1980), 139 -- 150.

\bibitem{Steinhous}  STEINHOUS, H.: \emph{Sur la convergence ordinaire et la
convergence asymtotique}, Colloquium Mathematicum \textbf{2}, (1951), 73 -- 74.

\bibitem{Te} TERAN, P.: \emph{A reduction principle for obtaining Tauberian theorems for
statistical convergence in metric spaces}, Bull. Belg. Math. Soc. \textbf{12}, (2005), 295 --
299.

\bibitem{Zygmund} ZYGMUND, A.: \emph{Trigonometric Series}, Cambridge University
Press, Cambridge, Uk, 1979.

\end{thebibliography}
\end{document}